\newcommand{\Z}{\mathbb{Z}}
\newcommand{\nov}{\widehat{\mathbb{Z} G}^\phi}
\newcommand{\minusnov}{\widehat{\mathbb{Z} G}^{-\phi}}
\def\iff{if and only if }
\newtheorem{thm}{Theorem}
\crefname{thm}{Theorem}{Theorems}
\title{Stallings's Fibring Theorem and $\mathrm{PD}^3$-pairs
}
\author{Martin R. Bridson}
\email{bridson@maths.ox.ac.uk}
\author{Dawid Kielak}
\email{kielak@maths.ox.ac.uk}
\address{Mathematical Institute,
	Andrew Wiles Building,
	Observatory Quarter,
	University of Oxford,
	Oxford,
	OX2 6GG,
	United Kingdom}
    \author{Monika Kudlinska}
\email{m.kudlinska@dpmms.cam.ac.uk}
\address{Department of Pure Mathematics and Mathematical Statistics, Centre for Mathematical Sciences, Wilberforce Road, Cambridge,
CB3 0WB, UK}
\begin{document}

\maketitle

\begin{abstract}
	We give a relatively self-contained proof that if a group $G$ fibres algebraically and is part of a $\mathrm{PD}^3$-pair, then $G$ is the fundamental group of a fibred compact aspherical 3-manifold.
    This yields a  homological proof of a classical theorem of Stallings: if $G = \pi_1(M^3)$ is the fundamental group of a compact irreducible 3-manifold $M^3$ and $\phi \colon G \to \Z$ is a surjective homomorphism with finitely generated kernel, then $\phi$ is induced by a topological fibration of $M^3$ over the circle.
\end{abstract}

\bigskip

In this note we present a homological proof of the following result, which was proved by  Stallings \cite{Stallings1962} in the case when $M^3$ is a compact irreducible manifold and the kernel $K$ is not isomorphic to $\Z/2\Z$.

\begin{thm}\label{main}
Let $M^3$ be a compact 3-manifold with aspherical (possibly empty) boundary. Suppose that there exists a short exact sequence
\begin{equation*}
	\label{eq:1} 1 \to K \to \pi_1(M^3) \xrightarrow{\phi} \Z \to 1
\end{equation*}
with $K$ finitely generated. Then $M^3$ is the total space of a fibre bundle
\[ \Sigma \to M^3 \xrightarrow{\Phi} \mathbb{S}^1\]
with $\Phi$ inducing $\phi$.
\end{thm}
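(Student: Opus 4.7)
The plan is to reduce to the case where $M^3$ is aspherical, invoke the abstract theorem announced in the abstract (which, from the algebraic fibring and a $\mathrm{PD}^3$-pair structure on $\pi_1(M^3)$, manufactures a fibred compact aspherical 3-manifold $N$), and then use topological rigidity of aspherical 3-manifolds to identify $M^3$ with $N$.

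First I would reduce to the case in which $M^3$ is aspherical. A compact 3-manifold decomposes as a connected sum of prime pieces, each of which is either irreducible or $\mathbb{S}^2\times\mathbb{S}^1$. The hypothesis that $\pi_1(M^3)$ surjects to $\Z$ with finitely generated kernel rules out any non-trivial free product splitting of $\pi_1(M^3)$: indeed, if $\pi_1(M^3)=A\ast B$ with $A,B$ both non-trivial, then a Bass-Serre argument (or a direct Schreier/Reidemeister-Schreier count) shows that the kernel of any surjection $A\ast B\to\Z$ is of infinite rank. Hence $M^3$ is prime. If $M^3=\mathbb{S}^2\times\mathbb{S}^1$ the conclusion is immediate; otherwise $M^3$ is irreducible, the hypothesis of aspherical boundary excludes $\mathbb{S}^2$ boundary components, and the sphere theorem together with the infinitude of $\pi_1(M^3)$ forces $M^3$ to be aspherical.

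Next I would verify that $(\pi_1(M^3),\{\pi_1(\partial_i M^3)\}_i)$ forms a $\mathrm{PD}^3$-pair. This is classical: a compact aspherical 3-manifold with aspherical boundary is an orientable (or non-orientable, with twisted coefficients) Poincar\'e pair of dimension three, and asphericity translates this at the level of the universal cover into the group-theoretic $\mathrm{PD}^3$-pair property. Together with the algebraic fibring $\phi\colon\pi_1(M^3)\to\Z$ supplied by hypothesis, this places us squarely in the setting of the abstract theorem of the paper, which then produces a compact aspherical 3-manifold $N$ and a locally trivial fibration $N\to\mathbb{S}^1$ together with an identification $\pi_1(N)\cong\pi_1(M^3)$ under which the induced homomorphism $\pi_1(N)\to\Z$ agrees with $\phi$.

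Finally I would invoke topological rigidity of compact aspherical 3-manifolds to conclude $M^3\cong N$: the isomorphism $\pi_1(N)\cong\pi_1(M^3)$ promotes to a homotopy equivalence of the underlying $K(\pi,1)$-spaces, and Waldhausen's theorem (which applies whenever $M^3$ has non-empty incompressible boundary, and in particular handles the bounded case after standard Haken reductions) combined with geometrization in the closed case upgrades this to a homeomorphism matching the peripheral structure. Transporting the fibration on $N$ through this homeomorphism yields the desired bundle $\Sigma\to M^3\to\mathbb{S}^1$ inducing $\phi$. The main obstacle is of course the middle step: constructing a genuine topological fibration from the purely homological $\mathrm{PD}^3$-pair data plus the algebraic fibring---this is the substance of the abstract theorem and constitutes the technical heart of the paper. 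The reductions and rigidity invocations bracketing it are standard, provided one is careful in the boundary case that the isomorphism of fundamental groups respects the conjugacy classes of peripheral subgroups produced by the abstract construction.
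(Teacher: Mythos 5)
Your overall strategy is the same as the paper's: rule out free-product splittings using the finitely generated kernel, reduce to the aspherical case, feed the resulting $\mathrm{PD}^3$-pair into the algebraic theorem, and finish with Waldhausen-type rigidity to transport the fibration back to $M^3$. The bracketing steps are indeed standard, but your reduction to asphericity has a genuine gap in the non-orientable case.

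The problem is the claim that once $M^3$ is irreducible with aspherical boundary and $\pi_1(M^3)$ is infinite, the Sphere Theorem forces $M^3$ to be aspherical. This is false for non-orientable manifolds: an irreducible $3$-manifold may contain a two-sided embedded projective plane, in which case $\pi_2(M^3)\neq 0$ and $M^3$ is not aspherical. The basic example is $\mathbb{RP}^2\times\mathbb{S}^1$, which is irreducible, closed, has infinite fundamental group $\Z/2\times\Z$ surjecting onto $\Z$ with finitely generated kernel $\Z/2$ --- and this is exactly the case Stallings had to exclude. Your argument never disposes of it (you treat $\mathbb{S}^2\times\mathbb{S}^1$ but not $\mathbb{RP}^2$-bundles over $\mathbb{S}^1$, nor the twisted $\mathbb{S}^2$-bundle), and your appeal to the $\mathrm{PD}^3$-pair machinery cannot apply to it since the group is not torsion-free. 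The paper handles this by (i) declaring $\mathbb{S}^2$- and $\mathbb{RP}^2$-bundles over the circle \emph{sporadic} and checking the theorem for them directly, and (ii) for non-sporadic irreducible manifolds containing a two-sided $\mathbb{RP}^2$, invoking Heil's 1973 result that $\pi_1$ then splits as $(A\ast B)\rtimes\Z/2$ with $A,B$ non-trivial, which is incompatible with a finitely generated kernel. Only after upgrading irreducibility to $P^2$-irreducibility do the Sphere Theorem and the Projective Plane Theorem yield asphericity. Two smaller points: passing from ``no non-trivial free-product splitting'' to ``prime'' silently uses the Poincar\'e Conjecture (a connected summand could a priori be a homotopy sphere), and in the closed case you should note that the target mapping torus is Haken (the fibre is incompressible), so Heil's $P^2$-irreducible version of Waldhausen's theorem suffices without invoking geometrization.
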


Let $G$ be a group and $\mathcal{H}$ a family of (not necessarily distinct) subgroups of $G$. Following Bieri--Eckmann \cite{BieriEckmann1978}, we say that $(G, \mathcal{H})$ is an $n$-dimensional \emph{Poincar\'{e} duality pair}, abbreviated to $\mathrm{PD}^n$-pair, if there exists a $G$-module $C$ which is isomorphic to $\Z$ as an abelian group, called the \emph{dualising module}, and natural isomorphisms
\begin{equation}\label{PDIsoms} \begin{split}H^k(G; M) \simeq H_{n-k}(G, \mathcal{H}; C \otimes M) \\
H^k(G, \mathcal{H}; M) \simeq H_{n-k}(G; C \otimes M)
\end{split}
\end{equation}
for all $G$-modules $M$ and all $k \in \Z$, where we tensor over $\Z$, and the action of $G$ on $C \otimes M$ is the diagonal action. We refer the reader to \cite{BieriEckmann1978} for the definition of relative (co)homology of groups. When $\mathcal{H} = \emptyset$, the above reduces to the definition of a $\mathrm{PD}^n$ group.

The combined work of Eckmann, M\"{u}ller and Linnell in \cite{EckmannMuller1980, EckmannLinnell1983} shows that every $\mathrm{PD}^2$ group is the fundamental group of a closed aspherical 2-manifold. (There are at least two alternate, more modern proofs of this fact, by Bowditch \cite{Bowditch2004} and by P. Kropholler and the second author \cite{KielakKropholler2021}.) From this, one can deduce that any $\mathrm{PD}^2$-pair $(G, \mathcal{H})$ is \emph{geometric} (see the discussion at the start of Section~11 in \cite{BieriEckmann1978}): that is, $G$ is the fundamental group of a compact 2-manifold $\Sigma$ and $\mathcal{H}$ is a (possibly empty) collection of infinite cyclic subgroups of $G$ which correspond to the boundary components of $\Sigma$.

Famously, for $n \geq 3$ it is conjectured that every finitely presented $\mathrm{PD}^n$ group is the fundamental group of an aspherical $n$-manifold. \cref{PD3_group} provides some evidence for this conjecture, and is part of a more general attempt to find homological alternatives to topological arguments. Another interesting example of this approach is a recent article of Reeves, Scott and Swarup~\cite{Reevesetal2020}. We refer the reader
to Hillman's book \cite{Hillman2020} for an  encyclopaedic treatment of this topic.

\begin{thm}\label{PD3_group}
Let $(G, \mathcal{H})$ be a $\mathrm{PD}^3$-pair. If $G$ admits a homomorphism onto $\Z$ with finitely generated kernel then $G$ is the fundamental group of an aspherical 3-manifold $M$, and $\mathcal H$ is the family of the fundamental groups of the boundary components of $M$. Moreover, the manifold $M$ fibres over the circle, and each of the boundary components is either a torus or a Klein bottle.
\end{thm}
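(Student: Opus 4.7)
The strategy is to reduce the theorem to the cited Eckmann--Müller--Linnell recognition of $\mathrm{PD}^2$-pairs and the mapping-torus construction, by proving that the kernel $K = \ker \phi$ is itself a $\mathrm{PD}^2$-pair. I set $\mathcal K$ to be the family of $K$-conjugacy classes of infinite subgroups of the form $K \cap H^g$ with $H \in \mathcal H$ and $g \in G$. The central claim to establish is that $(K, \mathcal K)$ is a $\mathrm{PD}^2$-pair.

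The main technical work lies in this duality transfer, which I plan to carry out in two stages. First, since $(G, \mathcal H)$ is of type $FP$ whereas $K$ is only assumed finitely generated, I must upgrade $K$ to type $FP$. For this I would invoke the Sikorav--Bieri--Renz criterion: $K$ is of type $FP_n$ iff the Novikov homologies $H_i(G; \nov)$ and $H_i(G; \minusnov)$ vanish for all $i \leq n$. The $\mathrm{PD}^3$-pair duality \eqref{PDIsoms} translates these Novikov homology groups into Novikov cohomology groups of $(G, \mathcal H)$, which I expect to control using the structure of the Novikov coefficient rings together with the fact that finite generation of $K$ already places $\pm \phi$ in the first BNS invariant $\Sigma^1(G)$. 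Second, with $K$ of type $FP$, I would combine the Lyndon--Hochschild--Serre spectral sequence for $1 \to K \to G \to \Z \to 1$ with the $\mathrm{PD}^1$-duality of $\Z$ and the $\mathrm{PD}^3$-pair duality \eqref{PDIsoms} in a dimension-shifting argument to obtain the duality isomorphisms defining a $\mathrm{PD}^2$-pair structure on $(K, \mathcal K)$.

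Once $(K, \mathcal K)$ is identified as a $\mathrm{PD}^2$-pair, the Eckmann--Müller--Linnell theorem and its extension to pairs (as recalled in the excerpt) yield $K = \pi_1(\Sigma)$ for a compact aspherical surface $\Sigma$, with $\mathcal K$ corresponding to its boundary circles. Choosing $t \in G$ with $\phi(t) = 1$, conjugation by $t$ is an automorphism of $K$ realised, by Dehn--Nielsen--Baer, by a homeomorphism $f \colon \Sigma \to \Sigma$ unique up to isotopy. The mapping torus $M_f$ is then a compact aspherical $3$-manifold with $\pi_1(M_f) \cong K \rtimes_f \Z = G$, fibring over $\mathbb S^1$ by construction. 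Its boundary is the mapping torus of $f|_{\partial \Sigma}$: each component is a circle bundle over $\mathbb S^1$, hence a torus or a Klein bottle according as the first-return map preserves or reverses orientation of the relevant boundary circle. A conjugacy-class comparison identifies $\mathcal H$ with the fundamental groups of these boundary components. The principal obstacle is the duality transfer of the second paragraph, and particularly the type-$FP$ upgrade for $K$.
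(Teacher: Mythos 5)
Your outline follows the same route as the paper: upgrade $K$ to type $\mathrm{FP}_2$ via the Sikorav--Bieri--Renz criterion, transfer the duality through the Lyndon--Hochschild--Serre spectral sequence to make $(K,\mathcal H_K)$ a $\mathrm{PD}^2$-pair, and finish with the geometricity of $\mathrm{PD}^2$-pairs and a mapping torus. However, the step you correctly flag as the principal obstacle --- showing $H_2(G;\nov)=H_2(G;\minusnov)=0$ --- is not actually carried out, and ``the structure of the Novikov coefficient rings together with $\pm\phi\in\Sigma^1(G)$'' will not suffice on its own: the peripheral subgroups enter essentially. By duality $H_2(G;\nov)\simeq H^1(G,\mathcal H;\nov)$, and to kill this relative group one needs the long exact sequence of the pair, which forces you to compute $H^0(\mathcal H;\nov)$ and $H^1(\mathcal H;\nov)$. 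The paper does this by a case analysis on each $H\in\mathcal H$: first the tail of the sequence gives $H^1(H;\nov)=0$, whence (by $\mathrm{PD}^2$-duality for $H$ and Sikorav applied to $H$ itself) either $\phi|_H\neq 0$ and $H\simeq\Z^2$ with $\ker\phi|_H\simeq\Z$ of type $\mathrm{FP}_2$, or $H\leqslant K$ and $\nov$ is flat over $\Z H$; either way $H_2(H;\nov)=0$, so $H^0(\mathcal H;\nov)=0$. Two further points are needed to make this run: a reduction to an orientable pair (the paper passes to an index-$2$ subgroup so the dualising module is trivial and duality can be applied without tensoring by $C$), and the passage from $H_i(G;\minusnov)=0$ for $i\leq 1$ to $H^i(G;\nov)=0$ for $i\leq 1$, which is done by dualising a partial chain retraction rather than by a naive universal-coefficients argument.

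Two smaller remarks. First, your appeal to Dehn--Nielsen--Baer for the monodromy needs a hypothesis: $\pi_1(\Sigma)$ is free when $\partial\Sigma\neq\emptyset$, and an automorphism of a free group is realised by a homeomorphism of $\Sigma$ only if it permutes the conjugacy classes of the peripheral subgroups; you must verify (as the paper does) that conjugation by $t$ permutes the $K$-conjugacy classes of the groups in $\mathcal H_K$, and then cite the version for bounded surfaces (Zieschang--Vogt--Coldewey). Second, in the duality-transfer step the isomorphism $H^i(G;\Z G)\simeq H^{i-1}(K;\Z K)$ alone is not enough; one must also match the dualising modules, i.e.\ check that $H^2(G;\Z G)$, restricted to $K$, is the augmentation-kernel module $\widetilde\Delta_{K/\mathcal H_K}$ built from the double cosets $K\backslash G/H_i$, before invoking the Bieri--Eckmann characterisation of $\mathrm{PD}^n$-pairs.
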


When $\mathcal H =  \emptyset$ and the kernel is of type $\mathrm{FP}_2$, this result was proved by Thomas \cite[Theorem 5]{Thomas1984}, and Hillman later showed  \cite[Theorem~1.19]{Hillman2002} that it  suffices to know that the kernel is finitely generated.

The case with boundary, that is, $\mathcal H \neq \emptyset$, is more tricky. Hillman \cite[Theorem 2]{Hillman1987} recovers most of \cref{PD3_group} when the kernel is again of type $\mathrm{FP}_2$. He relies on the following result of Bieri \cite[Corollary 8.6]{Bieri1981}: if $G$ is a group of cohomological dimension $2$, then any normal infinite-index subgroup $K \leqslant  G$ of type $\mathrm{FP}_2$ must be free.
The analogue of Bieri's result is false if one assumes only that $K$ is finitely generated, even in the case $G/K \simeq \Z$, with the Rips construction \cite{Rips1982} providing an ample source of counterexamples.

To circumvent this finiteness issue, one can use the theory of Novikov homology. This was done by Hillman~\cite[Theorem 9.9]{Hillman2020} via the work of Kochloukova~\cite{Kochloukova2006}. We also use Novikov rings in our proof, but in a more direct fashion.

    The \emph{Novikov ring} $\nov$ associated to $\phi$ is the ring obtained by taking all functions $G \to  \Z$ whose support intersected with $\phi^{-1}([-\infty,\kappa))$ is finite for every $\kappa \in \mathbb{R}$. Since $\nov$ contains $\Z G$, multiplication turns it into a $\Z G$-module. The only fact about Novikov rings that we are going to use is the generalised Sikorav's theorem:

\vspace{-0.3cm}

 \begin{equation}\label{SikoravsTheorem}
     \text{    \begin{minipage}[c][2.15 \baselineskip][c]{0.6 \textwidth}
     The group $K = \ker \phi$ is of type $\mathrm{FP}_m$ \iff $H_i(G; \nov) = 0 = H_i(G; \minusnov)$ for every $i \leqslant m$.
    \end{minipage}}
\end{equation}

The case $m=1$ was proved by Sikorav in his thesis \cite{Sikorav1987}; the more general version above is a combination of theorems of Schweitzer \cite[Theorm A.1]{Bieri2007} and Bieri--Renz \cite[Theorem B]{BieriRenz1988}. For a yet more general statement, see the work of S.~Fisher \cite{Fisher2021}.

\begin{proof}[Proof of \cref{PD3_group}]
    Let $(G, \mathcal{H})$ be as in the statement, with $\mathcal{H} = \{H_i\}_{i\in I}$. Let $K$ be the kernel of the epimorphism $\phi \colon G \to \Z$ and suppose that $K$ is finitely generated.
    We start by showing that $K$ is of type $\mathrm{FP}_2$. When $\mathcal H = \emptyset$, this is a special case of Hillman's result \cite[Theorem~1.19]{Hillman2002}; another argument can be extracted from the proof of \cite[Theorem 5.32]{Kielak2020a}. When $\mathcal H \neq \emptyset$, the possibility of some element of $\mathcal H$ lying entirely in $K$ causes a certain amount of annoyance, and stands in the way of using Hillman's proof. The second argument does however generalise, in a rather non-direct way, as follows.

Since we are proving that $K$ is of type $\mathrm{FP}_2$, it is enough to prove the statement for a finite-index subgroup of $K$. Hence we will pass to a subgroup of $G$ of index at most  $2$ that, together with a suitable collection of peripheral subgroups consisting of  subgroups of index $2$ of groups in $\mathcal H$ or two copies of such groups, forms an \emph{orientable} $\mathrm{PD}^3$-pair. We do this in order to make the dualising module  trivial, and hence to avoid worries about tensoring with it. This subgroup of index at most $2$ inherits an epimorphism to $\Z$ with finitely generated kernel, and hence we will lose no generality by assuming that $(G,\mathcal H)$ is an orientable $\mathrm{PD}^3$-pair for this part of the argument.

We now consider a part of the long exact sequence for the group pair $(G,\mathcal H)$:
\[
H^0(\mathcal H; \nov) \to H^1(G, \mathcal H; \nov) \to H^1(G; \nov) \to H^1(\mathcal H; \nov) \to H^2(G, \mathcal H; \nov).
\]
Since $K$ is finitely generated, that is, of type $\mathrm{FP}_1$, we know that $H_i(G; \minusnov)=0$ for $i \in \{0,1\}$. We claim that the cohomology groups $H^i(G; \nov)$ also vanish when $i \in \{0,1\}$ . Indeed, after picking a resolution  of $\Z$ by free finitely generated $\Z G$-modules, this allows us to construct a partial chain contraction over $\minusnov$ witnessing the vanishing of homology up to degree one. Dualising, this gives us a partial chain contraction over $\nov$ that witnesses the vanishing of $H^i(G; \nov)$ for $i \in \{0,1\}$. We plug this information back into the long exact sequence and obtain
\[
H^0(\mathcal H; \nov) \to H^1(G, \mathcal H; \nov) \to 0 \to H^1(\mathcal H; \nov) \to H^2(G, \mathcal H; \nov).
\]
The last term is isomorphic to $H_1(G; \nov) = 0$ by Poincar\'e duality, forcing
\[0 = H^1(\mathcal H; \nov) = \bigoplus_{H \in \mathcal H} H^1(H; \nov).\]
Now, the groups $H \in \mathcal H$ are themselves orientable $\mathrm{PD}^2$ groups by \cite[Theorem 4.2(iii)]{BieriEckmann1978}, and hence are fundamental groups of closed orientable aspherical surfaces. Poincar\'e duality gives us
\[
0 = H^1(H; \nov) \simeq H_1(H; \nov).
\]
If $\phi$ does not vanish on $H$, then $H_0(H;\nov)=0$ as well, and therefore the generalised Sikorav theorem tells us that $K \cap H$ is finitely generated. This implies that $H \simeq \Z^2$, since no other closed orientable aspherical surface has fundamental group that algebraically fibres.

Now let us focus on the initial part of the long exact sequence above. Poincar\'e duality tells us that
\[
H^0(\mathcal H; \nov) = \bigoplus_{H \in \mathcal H} H^0(H; \nov) \simeq   \bigoplus_{H \in \mathcal H} H_2(H; \nov).
\]
If $\phi$ does not vanish on $H$, then we have just shown that $\ker \phi\vert_H \simeq \Z$, which is of type $\mathrm{FP}_2$, and
hence $H_2(H; \nov)=0$. If $H \leqslant K$, then $\nov$ is a flat $\Z H$-module, and so $H_2(H; \nov) = H_2(H; \Z H) \otimes_{\Z H} \nov = 0 \otimes_{\Z H} \nov = 0$. We conclude that
\[H^0(\mathcal H; \nov) = 0,\]
and plugging this back into the long exact sequence yields $H^1(G,\mathcal H; \nov) = 0$. But by Poincar\'e duality, this abelian group is isomorphic to $H_2(G;\nov)$. An analogous argument yields that $H_2(G;\minusnov)=0$, and the generalised Sikorav theorem \eqref{SikoravsTheorem} tells us that $K = \ker \phi$ is of type $\mathrm{FP}_2$, as claimed.

We now return to the general case, that is, we no longer assume $(G,\mathcal H)$ to be orientable; we retain however the newly acquired information that $K$ is of type $\mathrm{FP}_2$.

\smallskip

    For each $i \in I$, let $X_i$ be a system of double coset representatives of $K \backslash G/H_i$. Fix $t \in G$ to be a lift of a generator of $\Z$. Note that for every $H_i \in \mathcal{H}$, the cardinality of $X_i$ is given by the index $n_i$ of $\phi(H_i)$ in $\Z$, and we can take $X_i = \{1, t, \ldots, t^{n_i - 1}\}$ if  $n_i < \infty$, and $X_i = \{ t^i  \mid i \in \Z\}$ otherwise.
    Define the system of subgroups
    \[\mathcal{H}_K = \bigcup_{i \in I} \{K \cap x^{-1} H_i x \mid  x \in X_i\}.\]

    We claim that $(K, \mathcal{H}_K)$ is a $\mathrm{PD}^2$-pair. When $\mathcal{H} = \emptyset$, this follows from \cite[Theorem~1.19]{Hillman2002}; when $\mathcal{H} \neq \emptyset$, it follows from \cite[Theorem~9.9]{Hillman2020}. We will give a direct proof of the claim following the outline of \cite[Theorem 9.11]{Bieri1981}.

    If $\mathcal{H} \neq \emptyset$, then $\mathrm{cd}(G) = 2$ and hence $\mathrm{cd}(K) \leq 2$. If $\mathcal{H} = \emptyset$, then $\mathrm{cd}(K) \leq 2$ by \cite{Strebel1977}. In either case, by the argument above, $K$ is of type $\mathrm{FP}_2$ and thus it is of type $\mathrm{FP}$.

    When $\mathcal{H} = \emptyset$, set $\Delta_{K /\mathcal{H}_K}$ to be the augmentation ideal in $\Z K$.
    When $\mathcal{H} \neq \emptyset$, let $\Delta_{K /\mathcal{H}_K}$ denote the kernel of the augmentation map
     \[ \bigoplus_{i \in I,\, x\in X_i} \mathbb{Z}[K /K \cap x^{-1} H_i x] \to \mathbb{Z}\]
     defined by linearly extending the map which sends each coset $g (K\cap x^{-1}H_i x )$ to a fixed generator of $\Z$. In either case, write $\widetilde{
    \Delta}_{K / \mathcal{H}_K} = C \otimes \Delta_{K / \mathcal{H}_K}$, where $C$ is the dualising module of the $\mathrm{PD}^3$-pair $(G, \mathcal{H})$ and the action of $K \leq G$ on $C$ is the restricted action; the tensor product is taken over $\Z$, and the action of $K$ on it is diagonal.

 Consider the Lyndon--Hochschild--Serre spectral sequence
    \[E_2^{pq} :  H^p(G/K; H^q(K, \Z G)) \Rightarrow H^{p+q}(G; \Z G).\]
    Since $G/K \simeq \Z$, the only values of $p$ yielding groups that are potentially non-zero are $0$ and $1$, and hence the spectral sequence collapses on page $2$. The only terms of the filtration that require further investigation are of the form $H^0( \Z ; H^i(K;\Z G) )$  and $H^1( \Z ; H^{i-1}(K;\Z G) )$.
    Since $K$ is of type $\mathrm{FP}$, we have $H^i(K;\Z G) \simeq H^i(K ;\Z K) \otimes_{\Z K} \Z G$ as $\Z G$-modules.
    In order to compute the cohomology groups above, we need to understand the action of $G/K \simeq \Z$ on $H^i(K ;\Z K) \otimes_{\Z K} \Z G$. To do this, it helps to view $H^i(K ;\Z K) \otimes_{\Z K} \Z G$ as $H^i(K ;\Z K) \otimes_{\Z} \Z (K \backslash G)$. The action of $G/K$ is then diagonal, with the action on $\Z (K \backslash G)$ being multiplication.
    We can now easily compute the $G/K$ invariants and coinvariants of $H^i(K;\Z G)$. Indeed, the invariants form the zero module and the coinvariants are a copy of $H^i(K;\Z K)$, with a $\Z G$-module structure that restricts to the usual $\Z K$-module structure. Using the fact that $\Z$ is an orientable $\mathrm{PD}^1$-group, we conclude from the spectral sequence that
    \[
     H^i(G;\Z G) \simeq  H^{i-1}(K;\Z K)
    \]
    as $\Z G$-modules.

       When $\mathcal H = \emptyset$, the above immediately tells us that $K$ is a $\mathrm{PD}^2$-group, as claimed.
    When $\mathcal H \neq \emptyset$, the pair $(G,\mathcal H)$ is a $\mathrm{PD}^3$-pair, and so \cite[Proposition 6.1]{BieriEckmann1978} tells us that $G$ is a $2$-dimensional duality group with dualising module $H^2(G;\Z G)$ isomorphic to the kernel of the augmentation map
    \[ \bigoplus_{i \in I} \mathbb{Z}[G /H_i] \to \mathbb{Z}\]
    tensored over $\Z$ with $C$.
    This tells us that $K$ is a $1$-dimensional duality group, and a trivial check verifies that the dualising module $H^2(G;\Z G)$ restricted to being a $\Z K$-module is precisely $\widetilde{\Delta}_{K / \mathcal{H}_K}$. This allows us to conclude that $(K, \mathcal H_K)$ is a $\mathrm{PD}^2$-pair using \cite[Proposition 6.1]{BieriEckmann1978} again.

\smallskip

Since $\mathrm{PD}^2$-pairs are geometric, $K$ is the fundamental group of a compact aspherical surface $\Sigma$ with boundary components in one-to-one correspondence with the elements in $\mathcal{H}_K$, and there exists $\alpha \in \mathrm{Out}(\pi_1(\Sigma))$ such that
\[G \simeq \pi_1(\Sigma) \rtimes_{\alpha} \mathbb{Z}.\]

It remains to show that the outer automorphism $\alpha$ of $\pi_1(\Sigma)$ is induced by a homeomorphism of $\Sigma$. When $\Sigma$ is a closed surface, this is a classical result, usually attributed to Dehn, that was proved by Nielsen in \cite{Nielsen1927} in the orientable case and by Mangler  \cite{Mangler1939} in  the non-orientable case.

Suppose now that $\Sigma$ has non-empty boundary. It is clear that the conjugation action of $t$ permutes the $K$-conjugacy classes of the subgroups in $\mathcal{H}_K$, and thus the outer automorphism $\alpha$ permutes the conjugacy classes of the elements of $\pi_1(\Sigma)$ corresponding to the boundary components of $\Sigma$. Now by Theorems~5.7.1 and 5.7.2 in \cite{ZieschangVogtColdewey1980}, $\alpha$ is induced by a homeomorphism $f \colon \Sigma \to \Sigma$. Hence $G$ is isomorphic to the fundamental group of the mapping torus of $f$. The boundary components of the mapping torus are tori or Klein bottles in one-to-one correspondence with the set $I$, and this correspondence identifies their fundamental groups with the groups in $\mathcal H$.
This proves the theorem.
\end{proof}

We say a 3-manifold $M^3$ is \emph{sporadic} if it is an $\mathbb{S}^2$- or $\mathbb{RP}^2$-bundle over the circle.

\begin{proof}[Proof of \cref{main}]
Let $M^3$ be a compact 3-manifold with aspherical (possibly empty) boundary, and suppose that there exists a surjective homomorphism $\phi \colon \pi_1(M^3) \to \Z$ with finitely generated kernel $K$. In particular, this implies that if $\pi_1(M^3) = A \ast B$ or $\pi_1(M^3) = (A \ast B) \rtimes \Z /2 \Z$, then at least one of $A$ or $B$ is the trivial group.

If $M^3$ is sporadic then there are exactly two epimorphisms $\pi_1(M^3) \to \Z$ and the result obviously holds. Hence, from now on we assume that $M^3$ is not sporadic. We claim that it suffices to consider 3-manifolds $M^3$ with incompressible boundary that are \emph{$P^2$-irreducible}, meaning irreducible and not containing an embedded two-sided projective plane.

Indeed, if $M^3$ is non-sporadic and reducible then $M^3$ splits as a connected sum $M^3 \simeq M_1 \# M_2$, where neither $M_1$ nor $M_2$ is the 3-sphere (see \cite[Lemma~3.13]{Hempel1976}). Then, $\pi_1(M) \simeq \pi_1(M_1) \ast \pi_1(M_2)$ and by Perelman's solution of the Poincar\'{e} conjecture, it follows that $\pi_1(M_1)$ and $\pi_1(M_2)$ are non-trivial.

If $M^3$ contains a compressible boundary component, then a standard argument using Papakyriakopoulos' Loop Theorem (see e.g. \cite[Lemma~1.5]{AschenbrennerFriedlWilton2015}) shows that again $\pi_1(M^3)$ admits a non-trivial free product decomposition.

Finally, if a non-sporadic $M^3$ is irreducible and contains a two-sided embedded projective plane, then by \cite{Heil1973} its fundamental group is of the form $(A \ast B) \rtimes \mathbb{Z}/ 2 \mathbb{Z}$ with $A$ and $B$ non-trivial groups. Hence, we may assume that $M^3$ is $P^2$-irreducible and thus by the Sphere Theorem \cite{Papakyriakopoulos1957} and the Projective Plane Theorem \cite{Epstein1961}, combined with the fact that $\pi_1(M^3)$ is infinite, we conclude that $M^3$ is aspherical.

Let $\mathcal{H} = \{H_i\}_{i \in I}$ be the family of subgroups of $\pi_1(M^3)$ corresponding to the boundary components of $M^3$. Then $(\pi_1(M^3), \mathcal{H})$ is a $\mathrm{PD}^3$-pair. Hence by the proof of \cref{PD3_group}, $K \simeq \pi_1(\Sigma)$ for some compact surface $\Sigma$, and there exists a homeomorphism $f \colon \Sigma \to \Sigma$ such that $\pi_1(M^3)$ is isomorphic to the fundamental group of the mapping torus $M_f$ of $f$,
\begin{equation}\label{eq:2} \pi_1(M^3) \simeq \pi_1(M_f) = \pi_1(\Sigma) \rtimes_{f_{*}} \Z. \end{equation}

Now $M_f$ and $M^3$ are $P^2$-irreducible, $M_f$ contains an incompressible surface, and the isomorphism \eqref{eq:2} preserves the peripheral structure of the fundamental groups of the 3-manifolds $M_f$ and $M^3$. Thus by \cite{Heil1969}, there exists a homeomorphism $M^3 \to M_f$ which induces the isomorphism in \eqref{eq:2}. We compose this with the fibration $M_f \to \mathbb{S}^1$ induced by the mapping torus structure, to obtain a topological fibration
\[\Sigma' \to M^3 \to \mathbb{S}^1\]
with $\Sigma'$ homeomorphic to the surface $\Sigma$. This proves \cref{main}.
\end{proof}

\subsection*{Acknowledgements}
The authors thank Jonathan Hillman, Peter Scott, and Gadde Swarup for helpful correspondence.

This work has received funding from the European Research Council (ERC) under the European Union's Horizon 2020 research and innovation programme (Grant agreement No. 850930).

\bibliographystyle{alpha}
\bibliography{refs.bib}

\end{document}